\newtheorem{theorem}[subsection]{Theorem}
\newtheorem{lemma}[subsection]{Lemma}
\newtheorem{proposition}[subsection]{Proposition}
\newtheorem{remark}[subsection]{Remark}
\newtheorem*{remark*}{Remark}
\newcommand\RRR{\mathbb{R}}
\newcommand\ZZZ{\mathbb{Z}}
\newcommand\FFF{\mathcal{F}}
\newcommand\id{\mathrm{id}}
\begin{document}
\title[Deformations of smooth function on $2$-torus whose KR-graph is a tree]
{Deformations of smooth function on $2$-torus whose KR-graph is a tree}
\author{Bohdan Feshchenko}
\address{Department of Algebra and Topology, Institute of Mathematics of NAS of Ukraine, Tereshchenkivska str. 3, Kyiv, 01601, Ukraine}
\curraddr{}
\email{fb@imath.kiev.ua}

\subjclass[2000]{57S05, 57R45, 37C05}
\keywords{Diffeomorphism, Morse function}

\begin{abstract}
	Let $f:T^2\to \RRR$ be Morse function on $2$-torus $T^2,$ and $\mathcal{O}(f)$ be the orbit of $f$ with respect to the right action of the group of diffeomorphisms $\mathcal{D}(T^2)$ on $C^{\infty}(T^2)$. Let also $\mathcal{O}_f(f,X)$ be a connected component of $\mathcal{O}(f,X)$ which contains $f.$
	In the case when Kronrod-Reeb graph of $f$ is a tree we obtain the full description of $\pi_1\mathcal{O}_f(f).$
	
	This result also holds for more general class of smooth functions $f:T^2\to \RRR$ which have the following property: for each critical point $z$ of $f$ the germ $f$ of $z$ is smoothly equivalent to some homogeneous polynomial $\RRR^2\to \RRR$ without multiple points
	
\end{abstract}

\maketitle

\section{Introduction}
Let $M$ be a smooth compact surface, $X$ be a closed (possible empty) subset of $M$, $\mathcal{D}(M, X)$ be the group of diffeomorphisms of $M$ fixed on some neighborhood of  $X$. Then the group $\mathcal{D}(M,X)$ acts on the space of smooth functions $C^{\infty}(M)$ on $M$ by the following rule:
\begin{equation}\label{main-act}
\gamma: C^{\infty}(M)\times \mathcal{D}(M,X)\to C^{\infty}(M),\qquad \gamma(f,h) = f\circ h.
\end{equation}
For $f\in C^{\infty}(M)$, let $$\mathcal{S}(f,X) := \{h\in \mathcal{D}(M,X)\,|\, f\circ h = f\},\qquad \text{and}\qquad \mathcal{O}(f,X) := \{f\circ h\,|\, h\in\mathcal{D}(M,X)\}$$ be respectively the stabilizer and the orbit of $f$ with respect to the action $\gamma.$ 

If $X$ is the empty set, then we put
$$
\mathcal{D}(M):=\mathcal{D}(M,\varnothing),\qquad \mathcal{S}(f):=\mathcal{S}(f,\varnothing),\qquad \mathcal{O}(f):=\mathcal{O}(f,\varnothing),
$$
and so on. Endow spaces $\mathcal{D}(M,X)$ and $C^{\infty}(M)$ with the corresponding Whitney topologies; these topologies induce certain topologies on $\mathcal{S}(f,X)$ and $\mathcal{O}(f,X).$

Let $\mathcal{S}_{\id}(f,X)$ and $\mathcal{D}_{\id}(f,X)$ be respectively connected components of $\mathcal{S}(f,X)$ and $\mathcal{D}(f,X)$ which contain the identity map $\id_M,$ and $\mathcal{O}_f(f,X)$ be the connected component of $\mathcal{O}(f,X)$ containing $f.$ We also set $\mathcal{S}'(f,X) = \mathcal{S}(f)\cap \mathcal{D}_{\id}(M,X).$

Let $\mathcal{F}(M)\subset C^{\infty}(M)$ be a set of smooth functions satisfying the following conditions:
\begin{itemize}
	\item[(B)] the function $f$ takes the constant value at each connected component of the boundary $\partial M$, and all critical points of $f$ belongs to the interior of $M$;
	\item[(P)] for each critical point $z$ of $f$ the germ of $f$ in $z$ is smoothly equivalent to some homogeneous polynomial $f_z:\RRR^2\to \RRR$ without multiple factors.
\end{itemize}
Let $\mathrm{Morse}_{\partial}(M)$ be the set of Morse functions satisfying condition (B). It is known that the set $\mathrm{Morse}_{\partial}(M)$ is a dense subset of $C^{\infty}(M).$ By Morse Lemma, every non-degenerate singularity of Morse function is smoothly equivalent to the polynomial $\pm x^2 \pm y^2$ in some chart representation around this critical point. Thus, we have the inclusion $\mathrm{Morse}_{\partial}(M)\subset \mathcal{F}(M)\subset C^{\infty}(M).$

\begin{theorem}[\cite{Maksymenko:AGAG:2006, Maksymenko:ProcIM:ENG:2010, Maksymenko:UMZ:ENG:2012}]
	Let $f\in\mathcal{F}(M)$ and $X$ be a finite (possible empty) union of regular components of level-sets of $f$. Then the following statements hold:
	\begin{enumerate}
		\item the map $p:\mathcal{D}(M,X)\to \mathcal{O}(f,X)$, $p(h) = f\circ h$ is a Serre fibration with the fiber $\mathcal{S}(f,X);$
		\item the restriction $p|_{\mathcal{D}_{\id}(M,X)}:\mathcal{D}_{\id}(M,X)\to \mathcal{O}_f(f,X)$ of the map $p$ is also a Serre fibration; 
		\item consider that $X = \varnothing$ and either $f$ has the critical point which is not non-degenerate local extremum, or $M$ is non-orientable. Then $\mathcal{S}_{\id}(f)$ is contractible, $\pi_n\mathcal{O}_f(f)\cong \pi_n M,$ for $n\geq 3$, $\pi_2\mathcal{O}_f(f) = 0$, and for $\pi_1\mathcal{O}_f(f)$ we have the following sequence
		\begin{equation}\label{eq:hom-main-seq}
		1\longrightarrow \pi_1\mathcal{D}_{\id}(M)\xrightarrow{~~p~~}\pi_1\mathcal{O}_f(f) \xrightarrow{~~\partial~~}\pi_0\mathcal{S}'(f)\longrightarrow 1;
		\end{equation}
		\item consider that $\chi(M) < 0$ or $X\neq  \varnothing$. Then  $\mathcal{D}_{\id}(M,X)$ and $\mathcal{S}_{\id}(f,X)$ are contractible, $\pi_n\mathcal{O}_f(f,X) = 0$ for $n\geq 2$, and the map $\partial: \pi_1\mathcal{O}_f(f,X)\to \pi_0\mathcal{S}'(f,X)$ is the isomorphism.
	\end{enumerate}
\end{theorem}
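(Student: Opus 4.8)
The plan is to derive all four statements from the homotopy long exact sequence of the Serre fibration $p$, once the fiber $\mathcal{S}(f,X)$ and the total space $\mathcal{D}(M,X)$ are understood. Thus the argument splits into three largely independent tasks: (i) proving that $p$, and its restriction to identity components, is a Serre fibration (statements (1)--(2)); (ii) computing the homotopy type of the fiber, namely the contractibility of $\mathcal{S}_{\id}(f,X)$; and (iii) importing the homotopy type of $\mathcal{D}_{\id}(M,X)$ from the classical theory of surface diffeomorphism groups. Parts (3) and (4) then fall out of the exact sequence by substituting what (ii) and (iii) provide.

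For (i), since $p$ is the orbit map of the group $\mathcal{D}(M,X)$ acting on $C^\infty(M)$, it suffices to produce a single continuous local section near the basepoint $f$: precomposing and postcomposing such a section with a fixed $h_0$ translates it to a section over a neighbourhood of any $f\circ h_0$, exhibiting $p$ as a locally trivial bundle with fiber $\mathcal{S}(f,X)$, hence a Serre fibration, and the same construction respects the identity components, giving (2). To build the section I would, for $g$ in the orbit close to $f$, reconstruct a diffeomorphism $h$ with $f\circ h=g$ depending continuously on $g$ and equal to $\id$ at $g=f$. Off the critical points $f$ is a submersion, so one transports $g$ onto $f$ by flowing along a gradient-like field, level set by level set; condition (P) is exactly what controls the singularities, since there the germ is a homogeneous polynomial without multiple factors, whose zero set is a transversal union of arcs and whose level foliation therefore has an explicit local model on a standard neighbourhood. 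Patching these pieces compatibly along the Kronrod--Reeb decomposition of $f$ yields the required section.

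The heart of the proof, and the step I expect to be the main obstacle, is (ii): the contractibility of $\mathcal{S}_{\id}(f,X)$ and of $\mathcal{S}_{\id}(f)$. The tool is the shift map along the Hamiltonian flow. Fix an area form $\omega$, let $F$ be the field with $\iota_F\omega=df$, and for $\varphi:M\to\RRR$ let $\Phi_\varphi(x)$ be the time-$\varphi(x)$ flow of $F$ through $x$; since $f$ is constant on the orbits of $F$, every $\Phi_\varphi$ preserves $f$. One then shows that $\varphi\mapsto\Phi_\varphi$ is a homotopy equivalence between a suitable space $\Gamma$ of admissible shift functions and $\mathcal{S}_{\id}(f,X)$. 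The space $\Gamma$ consists of functions adapted to the orbit structure of $F$ (smooth at the critical points, trivial near $X$), and away from one subtle factor it is affine, hence contractible; the only possible non-contractible factor is a circle arising from a globally periodic flow. The hypotheses are precisely what excludes that circle: a non-degenerate extremum forces local periods, but a non-extremal critical point—a saddle or a generalised multi-saddle, again governed by (P)—makes these periods globally incompatible and collapses the would-be $S^1$, while a nonempty $X$, the condition $\chi(M)<0$, or non-orientability rigidify the flow for the same reason. This is where the Kronrod--Reeb graph enters and where the bulk of the technical work lies.

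Finally, for (iii) I would quote the classical facts: $\mathcal{D}_{\id}(M,X)$ is contractible when $\chi(M)<0$ or $X\neq\varnothing$ (Earle--Eells, Gramain, Hamstrom), whereas for the remaining closed surfaces $\mathcal{D}_{\id}(S^2)\simeq SO(3)$ and $\mathcal{D}_{\id}(T^2)\simeq T^2$, so that in all cases not covered by (4) one has $\pi_2\mathcal{D}_{\id}(M)=0$ and $\pi_n\mathcal{D}_{\id}(M)\cong\pi_nM$ for $n\geq3$. With $\mathcal{S}_{\id}$ contractible, the fiber terms $\pi_n\mathcal{S}'$ vanish for $n\geq1$, so the long exact sequence of $p|_{\mathcal{D}_{\id}(M,X)}$ gives $\pi_n\mathcal{O}_f\cong\pi_n\mathcal{D}_{\id}(M)$ for $n\geq2$, yielding the displayed values in (3). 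Its low-degree tail
\begin{equation*}
0\longrightarrow\pi_1\mathcal{D}_{\id}(M)\xrightarrow{~~p~~}\pi_1\mathcal{O}_f(f)\xrightarrow{~~\partial~~}\pi_0\mathcal{S}'(f)\longrightarrow\pi_0\mathcal{D}_{\id}(M)=1
\end{equation*}
produces the short exact sequence \eqref{eq:hom-main-seq}, the injectivity on the left being exactly the vanishing of $\pi_1\mathcal{S}'(f)$. In case (4) the total space is contractible as well, so the same tail forces $\partial$ to be an isomorphism and kills every $\pi_n\mathcal{O}_f(f,X)$ with $n\geq2$.
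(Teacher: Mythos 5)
The paper does not prove this theorem---it is quoted from \cite{Maksymenko:AGAG:2006, Maksymenko:ProcIM:ENG:2010, Maksymenko:UMZ:ENG:2012}---so there is no internal proof to compare with; your outline does, however, follow essentially the same route as those references: continuous local sections of the orbit map for (1)--(2), the shift map $\varphi\mapsto\Phi_\varphi$ along the flow of $f$ for the contractibility of $\mathcal{S}_{\id}(f,X)$, and the Earle--Eells/Gramain description of $\mathcal{D}_{\id}(M,X)$ fed into the long exact sequence of the fibration for (3)--(4). Be aware that the two steps you compress into a sentence each---the construction of the local section near $f$ (which occupies most of \cite{Maksymenko:AGAG:2006}) and the identification of the space of admissible shift functions together with the exclusion of the circle factor---are where essentially all the work lies, and that on non-orientable $M$ the field defined by $\iota_F\omega=df$ does not exist globally, so that case requires a separate device (local flows or the orientation double cover) rather than a single Hamiltonian field.
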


In the serious of papers \cite{Maksymenko:AGAG:2006, Maksymenko:MFAT:2010, Maksymenko:ProcIM:ENG:2010, Maksymenko:UMZ:ENG:2012, Maksymenko:DefFuncI:2014, Maksymenko:orbfin:2014, Maksymenko:pi1Repr:2014}
Maksymenko described homotopy type of stabilizers of the action (\ref{main-act}).

Maksymenko and the author in papers
\cite{MaksymenkoFeshchenko:MS:2014,MaksymenkoFeshchenko:MFAT:2015} describes fundamental group of the orbit $\pi_1\mathcal{O}_f(f)$ of the function  $f$ in $\FFF(T^2)$ in the case when KR-graph of the function $f$ has a cycle.
In the case when KR-graph of $f$ is a tree, the authors
\cite{MaksymenkoFeshchenko:UMZ:ENG:2014} found conditions under which the sequence (\ref{eq:hom-main-seq}) splits.
The aim of the paper is to describe the group $\pi_1\mathcal{O}_f(f)$ of the  function $f$ from $\FFF(T^2)$ whose KR-graph is a tree.

\begin{remark}
Let also $w:(I^k,\partial I^k, 0)\to (\mathcal{D}_{\id}(M,X), \mathcal{S}_{\id}(f,X),\id_{M})$ be a continuous map of triples, $k\geq 0.$ From (2) of Theorem 1.1 follows that for every $k\geq 0$ there is an isomorphism
$$
\lambda_k:\pi_k (\mathcal{D}_{\id}(M,X), \mathcal{S}_{\id}(f,X),\id_{M})\to \pi_k\mathcal{O}_f(f,X),\qquad \lambda_k([\omega]) = [f\circ \omega],
$$ 
see for example \cite[\S~4.1, Theorem~4.1]{Hatcher:AlgTop:2002}. In the text we will identify $\pi_1\mathcal{O}_f(f)$ with $\pi_1(\mathcal{D}_{\id}(M),\mathcal{S}'(f)).$ 
\end{remark}

\subsection*{Structure of the paper} In Section 2 we collects some preliminaries: wreath products of special type, needed for formulation of the result, Kronrod-Reeb graph of smooth functions and its automorphism group. The main result of the paper is Theorem \ref{thm:main} which is proved in Section 3.

\section{Preliminaries}
\subsection{Wreath products $G\wr_{\ZZZ_n\times \ZZZ_m}\ZZZ^2$} Let $G$ be a group, and $1$ be the unit of $G,$ $\mathrm{Map}(\ZZZ_n\times \ZZZ_m,G)$ be the group of all maps $\ZZZ_n\times \ZZZ_m\to G$ with the point-wise multiplication, i.e.,
for $\alpha,\beta:\ZZZ_n\times \ZZZ_m\to G$ from $\mathrm{Map}(\ZZZ_n\times\ZZZ_m,G)$ we have $(\alpha\beta)(i,j) = \alpha(i,j)\beta(i,j),$ where $(i,j)\in\ZZZ_n\times \ZZZ_m,$ $n,m \geq 1.$ 

The group $\ZZZ^2$ acts from the right on $\mathrm{Map}(\ZZZ_n\times \ZZZ_m, G)$ be the following rule: if $\alpha\in \mathrm{Map}(\ZZZ_n\times \ZZZ_m, G)$ and $(k,j)\in\ZZZ^2$, then the result of this action $\alpha^{k,l}$ is given by 
$$
\alpha^{k,j}(i,j) = \alpha(i + k\,\mathrm{mod}\, n, j + l\,\mathrm{mod}\, m),\qquad (i,j)\in\ZZZ^2.
$$
The semi-direct product $\mathrm{Map}(\ZZZ_n\times \ZZZ_m, G)\rtimes \ZZZ^2,$ which corresponds to this $\ZZZ^2$-action we denote by 
$$
G\wr_{\ZZZ_n\times \ZZZ_m}\ZZZ^2:=\mathrm{Map}(\ZZZ_n\times \ZZZ_m, G)\rtimes \ZZZ^2,
$$
and will call the wreath product of $G$ and $\ZZZ^2$ under $\ZZZ_n\times \ZZZ_m.$

Thus $G\wr_{\ZZZ_n\times \ZZZ_m}\ZZZ^2$ is a Cartesian product with the operation:
$$
(\alpha, (k_1,k_2))(\beta, (l_1,l_2)) = (\alpha\beta^{k_1,k_2}, (k_1+l_1, k_2 + l_2))
$$
for all $(\alpha, (k_1,k_2)), (\beta, (l_1ml_2))\in \mathrm{Map}(\ZZZ_n\times \ZZZ_m, G)\times \ZZZ^2.$ Moreover we have the following exact sequence
$$
1\longrightarrow \mathrm{Map}(\ZZZ_n\times \ZZZ_m, G)\xrightarrow{~~\sigma~~} G\wr_{\ZZZ_n\times \ZZZ_m}\ZZZ^2 \xrightarrow{~~p~~} \ZZZ^2\longrightarrow 1,
$$
where $\sigma(\alpha) = (\alpha, (0,0))$ is the inclusion, and $p(\alpha, (a_1,a_2)) = (a_1,a_2)$ is the projection.

\subsection{Kronrod-Reeb graph of smooth functions}
Let $f\in\mathcal{F}(M)$ and $c$ be a real number. Recall that a connected component $C$ of level-set $f^{-1}(c)$ is called {\it critical} if $C$ contains at almost one critical point of $f$, otherwise $C$ is called {\it regular}.

Let $\Delta$ be a partition of $M$ onto connected components of level-sets of the function $f$. It is well known that the quotient-space $M/\Delta$ is $1$-dimensional CW complex called Kronrod-Reeb graph of $f$, or simply, KR-graph of  $f$; we will denote it by $\Gamma_f$. From the definition of $\Gamma_f$ follows that vertices of $\Gamma_f$ are critical components of level-sets of $f.$ Let also $p_f:M\to \Gamma_f$ be the projection map. So, the function $f$ can be represented as the following composition:
$$
f = \phi_f\circ p_f: M\xrightarrow{~~p_f~~}\Gamma_f\xrightarrow{~~\phi_f~~}\RRR,
$$
where $\phi_f$ is the map induced by $f$. Note that $\phi_f$ is monotone on the set of edges of $\Gamma_f.$

\subsection{Actions of $\mathcal{S}(f)$ on $\Gamma_f$}
Let $f$ be a smooth function from $\FFF(M),$ and $h\in\mathcal{S}'(f)$. Then $f\circ h = f$ by definition of $h$, and hence, $h(f^{-1}(c)) = f^{-1}(c)$ for all $c\in\RRR.$ Then $h\in\mathcal{S}'(f)$ interchanges level-sets of $f.$  So, $h$ induces the homomorphism $\rho(h)$ of KR-graph $\Gamma_f$ such that the diagram
$$
\xymatrix{
	M\ar[r]^{p_f} \ar[d]^{h} & \Gamma_f \ar[r]^{\phi_f}\ar[d]^{\rho(h)} & \RRR \ar@{=}[d]\\
	M\ar[r]^{p_f} & \Gamma_f \ar[r]^{\phi_f} & \RRR
	}
$$
is commutative. In other words, we have a homomorphism:
$
\rho:\mathcal{S}'(f)\to \mathrm{Aut}(\Gamma_f).
$
Denote by $G$ the image of $\mathcal{S}'(f)$ in $\mathrm{Aut}(\Gamma_f)$ under the map $\rho.$

Let $v$ be the vertex of $\Gamma_f$ and $G_v = \{g\in G\,|\, g(v) = v\}$ be a stabilizer of $v$ under the action $G$. By star $\mathrm{st}(v)$ of the vertex $v$ we will mean a closed connected $G_v$-invariant neighborhood of $v$ in $\Gamma_f$, which does not contains the other vertices of $\Gamma_f$.  The set $G_v^{loc} = \{g|_{\mathrm{st}(v)}\,|\, g\in G_v\}$ is a subgroup of $\mathrm{Aut}(\mathrm{st}(v))$, which consists of the restrictions of elements of $G_v$ onto $\mathrm{st}(v).$ We will call it {\it the local stabilizer} of $v$ under the action of $G.$ Note that the group $G_{v}^{loc}$ does not depends on the choice of  the star $\mathrm{st}(v)$ of the vertex $v$. In particular, the following diagram 
$$
\xymatrix{
	\mathcal{S}'(f) \ar[d]_{pr} \ar[r]^{\rho} & G \ar[d]_r \ar[r] & \mathrm{Aut}(\Gamma_f)\\
	\pi_0\mathcal{S}'(f) \ar[ru]^{\rho_0}\ar[r]^{\widehat{\rho}} & G_v^{loc} \ar[r] & \mathrm{Aut}(\mathrm{st}(v))
	}
$$
is commutative, where $p$ is a projection, $r$ is the restriction map onto $\mathrm{st}(v),$ $\rho = \rho_0\circ r$, and $\widehat{\rho} = r\circ \rho.$

For the function $f\in\FFF(T^2)$ on $2$-torus the following result holds:
\begin{lemma}[Proposition 1, \cite{MaksymenkoFeshchenko:UMZ:ENG:2014}]\label{lm:unique_vertex}
	Let $f\in\FFF(T^2)$ be such that its KR-graph $\Gamma_f$ is a tree. Then there exists the unique vertex $v$ of the graph $\Gamma_f$ such that each component of $T^2 - p_f^{-1}(v)$ is an open $2$-disk.
\end{lemma}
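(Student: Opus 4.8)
The plan is to use the hypothesis that $\Gamma_f$ is a tree to force every \emph{regular} level-set component to be a separating circle, and then to concentrate the genus of $T^2$ at a single vertex. The first step is purely topological: I would show that for any connected subset $S\subseteq \Gamma_f$ the preimage $p_f^{-1}(S)$ is connected. This follows from the fact that $p_f$ is a quotient map with connected fibres (the fibres are exactly the level-set components): if $p_f^{-1}(S)=A\sqcup B$ were a separation, then, since each fibre is connected and hence lies entirely in $A$ or in $B$, both $A$ and $B$ would be saturated, so $p_f(A),p_f(B)$ would give a separation of $S$, a contradiction.

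Granting this, remove an interior point $q$ of an edge $e$; its fibre $C_e=p_f^{-1}(q)$ is a single circle. Since $\Gamma_f$ is a tree, $\Gamma_f-q$ has exactly two components, so by the previous step $T^2-C_e$ has exactly two components, i.e. $C_e$ is separating. Because every separating simple closed curve on the torus is null-homotopic and splits $T^2$ into a closed $2$-disk $D_e$ and a one-holed torus $W_e$, each regular fibre bounds a disk on exactly one side, and the handle of $T^2$ lies in $W_e$. I would then orient every edge $e$ of $\Gamma_f$ \emph{away from the disk side} of $C_e$ (equivalently, toward the side carrying the handle); that this orientation is well defined, i.e. that the handle stays on one fixed end of $e$, follows because along the interior of $e$ there are no critical points and the fibres are mutually isotopic, so the bounding disk varies monotonically.

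The heart of the argument is a counting step on the oriented tree. For a vertex $w$ let $\mathrm{out}(w)$ be the number of incident edges pointing away from $w$, that is, edges $e$ whose disk $D_e$ lies on the $w$-side. I claim $\mathrm{out}(w)\le 1$: if two incident edges $e,e'$ both had their disk on the $w$-side, then the disk $D_e$ would contain the whole $w$-side of $C_e$, in particular it would contain $C_{e'}$ together with its handle-carrying side $W_{e'}$, which is impossible since a disk contains no handle. As $\Gamma_f$ is a tree with $V$ vertices and $V-1$ edges, and since each edge contributes $1$ to exactly one $\mathrm{out}(w)$, we get $\sum_w \mathrm{out}(w)=V-1$; combined with $\mathrm{out}(w)\le 1$ this forces exactly one vertex $v$ with $\mathrm{out}(v)=0$ and all other vertices with out-degree $1$. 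This $v$ is the desired vertex and it is unique.

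It remains to identify $\mathrm{out}(v)=0$ with the disk condition. The components of $T^2-p_f^{-1}(v)$ are exactly the sets $U_i=p_f^{-1}(T_i)$, where $T_1,\dots,T_d$ are the components of $\Gamma_f-v$ (again connected by the first step). For the edge $e_i$ joining $v$ to $T_i$, the condition $\mathrm{out}(v)=0$ says the disk $D_{e_i}$ lies on the $T_i$-side of $C_{e_i}$, so $U_i$ is contained in this disk. I would finish by checking that the frontier of $U_i$ in $T^2$ is a single circle lying in $p_f^{-1}(v)$, whence $\overline{U_i}$ is a compact connected subsurface of a disk with one boundary circle, hence a disk, and $U_i$ is an open $2$-disk. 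The main obstacle I anticipate is precisely this last frontier analysis: one must show that the local structure of the critical set $p_f^{-1}(v)$ (governed by condition (P)) meets each $U_i$ along a single circle, so that no $U_i$ acquires extra boundary or genus; the monotonicity of the bounding disks along edges, together with the out-degree bookkeeping, are the tools I would use to control it.
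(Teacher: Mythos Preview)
The paper does not prove this lemma; it is quoted as Proposition~1 of \cite{MaksymenkoFeshchenko:UMZ:ENG:2014} and used as a black box, so there is no in-paper argument to compare against. On its own merits your plan is sound through the orientation/counting step: every regular fibre is a separating circle on $T^2$ and hence bounds a disk on exactly one side, the inequality $\mathrm{out}(w)\le 1$ together with $|E|=|V|-1$ forces a unique sink $v$, and conversely any vertex satisfying the disk condition is a sink, which gives uniqueness.

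The one genuine gap is in the final identification. For a fixed interior point $q$ of $e_i$ the inclusion runs the \emph{opposite} way to what you wrote: the non-$v$ component of $\Gamma_f\setminus\{q\}$ is a proper subset of $T_i$, so the open disk bounded by $C_{e_i}=p_f^{-1}(q)$ sits inside $U_i=p_f^{-1}(T_i)$, not the reverse. In particular $\overline{U_i}$ is not contained in any single $D_{e_i}$, and its frontier lies in the critical graph $V=p_f^{-1}(v)$, where it is typically a subgraph with branch vertices rather than a simple closed curve; so the ``compact subsurface with one boundary circle'' argument does not go through as stated, and this is exactly the obstacle you flagged. The clean repair uses only what you already have: let $q\to v$ along $e_i$. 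The corresponding open disks are properly nested (if $q'$ is closer to $v$ than $q$ then $\overline{D_q}\subset\mathrm{int}\,D_{q'}$) and their union is precisely $U_i$. Thus $U_i$ is an increasing union of open $2$-disks, hence simply connected, hence an open $2$-disk. With this replacement for the frontier analysis your argument is complete.
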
 
The vertex $v$ of $\Gamma_f$ and the critical component $V = p_f^{-1}$ of $f^{-1}(\phi_f(v))$, which corresponds to $v$ we will call {\it special}.

The main result of out paper is the following result:
\begin{theorem}\label{thm:main}
	Let $f\in\FFF(T^2)$ be such that $\Gamma_f$ is a tree, and $v$ be the special vertex of $\Gamma_f$. Then 
	\begin{enumerate}
		\item $G_v^{loc}\cong \ZZZ_n\times \ZZZ_{nm}$ for some $n,m\in\mathbb{N}$;
		\item there exist closed $2$-disks $D_1,D_2,\ldots, D_r\subset T^2$ such that $f|_{D_i}\in\FFF(D_i),$ $i = 1,2,\ldots, r$, and there is an isomorphism
		$$
		\xi:\pi_1\mathcal{O}_f(f)\cong \prod_{i = 1}^r \pi_0\mathcal{S}'(f|_{D_i},\partial D_i)\wr_{\ZZZ_n\times \ZZZ_{nm}}\ZZZ^2.
		$$
	\end{enumerate}
\end{theorem}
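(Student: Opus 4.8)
The plan is to combine the fibration sequence of Theorem~1.1 with a geometric analysis of how an $f$-preserving diffeomorphism acts on the special component $V=p_f^{-1}(v)$ and on the disks into which $V$ cuts $T^2$. Since $T^2$ is orientable and any $f\in\FFF(T^2)$ with $\Gamma_f$ a tree must possess a saddle (as $\chi(T^2)=0$ forbids all critical points being extrema), statement~(3) of Theorem~1.1 applies and yields the exact sequence
$$1\longrightarrow \pi_1\mathcal D_{\id}(T^2)\xrightarrow{~p~}\pi_1\mathcal O_f(f)\xrightarrow{~\partial~}\pi_0\mathcal S'(f)\longrightarrow 1.$$
Because $\mathcal D_{\id}(T^2)$ is homotopy equivalent to $T^2$ we have $\pi_1\mathcal D_{\id}(T^2)\cong\ZZZ^2$, so $\pi_1\mathcal O_f(f)$ is an extension of $\pi_0\mathcal S'(f)$ by $\ZZZ^2$. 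The whole task is then to identify $\pi_0\mathcal S'(f)$ and this extension with the wreath product.

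For part~(1) I would show that $G_v^{loc}$ is realized by finite-order symmetries of $(T^2,f)$. Every $h\in\mathcal S'(f)$ lies in $\mathcal D_{\id}(T^2)$, hence fixes $H_1(T^2)$; as $V$ is the unique special component (Lemma~\ref{lm:unique_vertex}), $h$ preserves $V$ and permutes the complementary disks, inducing a finite image $G_v^{loc}$ on $\mathrm{st}(v)$. A linearization argument shows each finite cyclic subgroup of this action is conjugate to a group of translations of $T^2$, i.e.\ to a finite subgroup of $(\RRR/\ZZZ)^2$. Since every finite subgroup of $(\RRR/\ZZZ)^2$ has rank at most two and is therefore of the form $\ZZZ_n\times\ZZZ_{nm}$ with $n\mid nm$, this gives $G_v^{loc}\cong\ZZZ_n\times\ZZZ_{nm}$.

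For part~(2), take $D_1,\dots,D_r$ to be closures of representatives of the orbits of the complementary disks under the induced $\ZZZ_n\times\ZZZ_{nm}$-action; each is a closed $2$-disk with $f$ constant on $\partial D_i\subset V$, so $f|_{D_i}\in\FFF(D_i)$. One checks this action on the disks is free, so each orbit is a copy of $\ZZZ_n\times\ZZZ_{nm}$, and an element of $\mathcal S'(f)$ is determined up to isotopy by the translation it induces on the disk pattern together with the isotopy class of its restriction to each disk rel boundary. The latter data assemble into $\mathrm{Map}(\ZZZ_n\times\ZZZ_{nm},G)$ with $G=\prod_{i=1}^r\pi_0\mathcal S'(f|_{D_i},\partial D_i)$, while the translation shifts the index, giving $\pi_0\mathcal S'(f)\cong\mathrm{Map}(\ZZZ_n\times\ZZZ_{nm},G)\rtimes(\ZZZ_n\times\ZZZ_{nm})$. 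To recover the full group, note that inside $W=G\wr_{\ZZZ_n\times\ZZZ_{nm}}\ZZZ^2$ the subgroup $Z=\{(1,(k,l)):k\in n\ZZZ,\ l\in nm\ZZZ\}$ is central with $Z\cong\ZZZ^2$ and $W/Z\cong\pi_0\mathcal S'(f)$; thus both $W$ and $\pi_1\mathcal O_f(f)$ are central extensions of $\pi_0\mathcal S'(f)$ by $\ZZZ^2$. Matching them amounts to checking that the generators of $\pi_1\mathcal D_{\id}(T^2)$, represented by loops of toral translations, map under $p$ to central elements corresponding to shifting the disk pattern through the full periods $n$ and $nm$, i.e.\ to the generators of $Z$; producing a geometric section that realizes $W$ inside $\pi_1(\mathcal D_{\id}(T^2),\mathcal S'(f))$ compatibly with $p$ and $\partial$ then yields $\xi$.

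The main obstacle is the content of part~(2): proving rigorously that, modulo isotopy, an $f$-preserving diffeomorphism splits as a translation of the disk pattern together with independent symmetries of the individual disks, with no residual interaction between distinct disks, and that $\ZZZ^2=\pi_1\mathcal D_{\id}(T^2)$ injects as exactly the central subgroup $Z$. Establishing freeness of the $\ZZZ_n\times\ZZZ_{nm}$-action on the disks, and carrying out the centrality and period computation for the toral loops, are the delicate points; by contrast the reduction through Theorem~1.1 and the abelian-group classification of part~(1) are comparatively formal.
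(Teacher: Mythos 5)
Your overall strategy (cut $T^2$ along the special component $V$, read off disk data, feed the result into the fibration sequence of Theorem 1.1) is the same as the paper's, but two steps that you treat as routine are exactly where the real content lies, and as stated they do not go through. First, in part (1) you never explain how $G_v^{loc}$ --- which a priori is only a subgroup of $\mathrm{Aut}(\mathrm{st}(v))$, hence possibly a non-abelian permutation group of the edges at $v$ --- acts on $T^2$ at all. The paper realizes it by a genuine finite group of diffeomorphisms via the section $s:G_v^{loc}\to\mathcal{S}'(f)$ of Proposition \ref{th:com-act-tree}, whose hypothesis (condition (C)) is verified by a Lefschetz-number argument: an $h$ isotopic to $\id_{T^2}$ has $L(h)=\chi(T^2)=0$, so its combinatorial action on the cells of $V$ is either trivial or free. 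Your replacement, ``each finite \emph{cyclic} subgroup is conjugate to translations,'' is strictly weaker: conjugating cyclic subgroups one at a time neither shows the whole group is abelian nor embeds it in $(\RRR/\ZZZ)^2$. What actually pins down $G_v^{loc}\cong\ZZZ_n\times\ZZZ_{nm}$ is that the realized action is free and orientation-preserving with quotient a torus, so that $\pi_1T^2\hookrightarrow\pi_1(T^2/G_v^{loc})$ is a finite-index sublattice of $\ZZZ^2$ and the Smith normal form (the lemma from Pontryagin quoted in Section 3) gives the result.

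Second, in part (2) the assertion that an element of $\mathcal{S}'(f)$ is determined up to isotopy by the translation of the disk pattern together with its restrictions to the disks rel boundary, ``with no residual interaction,'' is precisely the hard lemma of the paper (Step 3): one must deform every $h\in\ker(r\circ\rho)$ to be the identity on an $f$-regular neighborhood $N$ of $V$, which is done there by writing $h=\mathbf{F}_{\beta_h}$ along the Hamiltonian flow of $f$ near $V$ and damping $\beta_h$ off with a flow-invariant bump function; you flag this as delicate but offer no mechanism for it. Likewise, observing that both $W=G\wr_{\ZZZ_n\times\ZZZ_{nm}}\ZZZ^2$ and $\pi_1\mathcal{O}_f(f)$ are central extensions of $\pi_0\mathcal{S}'(f)$ by $\ZZZ^2$ cannot finish the proof: central extensions with fixed kernel and quotient are classified by a second cohomology group and need not be isomorphic, and even the centrality of $\pi_1\mathcal{D}_{\id}(T^2)$ in $\pi_1\mathcal{O}_f(f)$ is itself a theorem, not a formality. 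The paper avoids this by constructing $\xi$ explicitly: it lifts commuting isotopies of $T^2/G_v^{loc}$ to flows $\mathsf{L},\mathsf{M}$ on $T^2$ satisfying $\mathsf{L}_k=s(k\bmod n,0)$, $\mathsf{M}_k=s(0,k\bmod mn)$, and sends $(\alpha,(a,b))$ to the relative loop that transports each disk $D_{ijk}$ by $\mathsf{M}_{k+at}\circ\mathsf{L}_{j+bt}$ while performing the prescribed disk isotopies; only then does the five-lemma apply. Your ``period computation for the toral loops'' is the right intuition for why $Z$ corresponds to $\pi_1\mathcal{D}_{\id}(T^2)$, but without the section $s$ and the explicit lifted isotopies there is nothing to compute with.
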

In particular, in the case $G_v^{loc} = 1$ we have the isomorphism $\xi:\pi_1\mathcal{O}_f(f)\cong \pi_0\mathcal{S}'(f)\times \ZZZ^2,$ see Theorem 2 \cite{MaksymenkoFeshchenko:UMZ:ENG:2014}.

\subsection{Combinatorial actions of finite groups on surfaces}
Now we recall some results from \cite{Feshchenko:2016:ActTree}.
Let $f\in\FFF(M)$. Suppose that its Kronrod-Reeb graph $\Gamma_f$ contains a special vertex $v$, and $V$ be the special component of level set of $f$ which corresponds to $v.$

Let $\mathcal{S}_V(f) = \{h\in \mathcal{S}(f)\;|\;h(V) = V\}$ be a subgroup of $\mathcal{S}(f)$ leaving $V$ invariant.  It is easy to see that $\rho(\mathcal{S}_V(f))\subset G_v.$ We denote by $\phi$ the  map
$$
\phi = r\circ \rho: \mathcal{S}_V(f)\xrightarrow{~~\rho~~} G_v\xrightarrow{~~r~~} G_v^{loc}.
$$

Let $H$ be a subgroup of $G_v^{loc}$ and $\mathcal{H}= \phi^{-1}(H)$ be a subgroup of $\mathcal{S}_V(f).$ We will say that the group
$\mathcal{H}$ has property {\rm (C)}  if the following conditions hold.
\begin{itemize}
	\item[(C)] Let $h\in\mathcal{H},$ and $E$  be a $2$-dimensional element of $\Xi$. Suppose that $h(E) = E.$ Then $h(e) = e$ for all other $e\in\Xi$, and the map $h$ preserves orientation of each element of $\Xi.$
	
\end{itemize}

\begin{proposition}[Theorem 2.2 \cite{Feshchenko:2016:ActTree}]\label{th:com-act-tree}
	Suppose $f\in\FFF(M)$ is  such that its KR-graph $\Gamma_f$ contains a special vertex $v$, and $G_v^{loc}$ be the local stabilizer of $v.$ Let also $H$ be a subgroup of $G_v^{loc},$ and $\mathcal{H} = \phi^{-1}(H)$ be a subgroup of $\mathcal{S}_V(f)$ satisfying condition $(\mathrm{C}).$ Then there exists a section $s:H\to\mathcal{H}$ of the map $\phi,$ i.e., the map $s$ is a homomorphism satisfying the  condition $\phi\circ s = \id_{H}.$
\end{proposition}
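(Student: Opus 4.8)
The plan is to recognize that a homomorphic section $s\colon H\to\mathcal{H}$ with $\phi\circ s=\id_H$ is exactly the same datum as a smooth effective action of the finite group $H$ on $M$ through $\mathcal{S}_V(f)$ whose composition with $\phi$ is the identity of $H$. Since $G_v^{loc}$ is finite, $H$ is finite, so the task is to realize the prescribed combinatorial automorphisms of $\mathrm{st}(v)$ by honest $f$-preserving diffeomorphisms of $M$ that form a group isomorphic to $H$. I would therefore aim to \emph{construct such an action} rather than to split the exact sequence $1\to\ker(\phi|_{\mathcal{H}})\to\mathcal{H}\xrightarrow{\phi}H\to1$ by abstract cohomological means.

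First I would extract the combinatorial input hidden in condition (C). Suppose $g\in H$ fixes some $2$-dimensional element $E\in\Xi$, and pick any $h\in\mathcal{H}$ with $\phi(h)=g$; then $h(E)=E$. By (C) the diffeomorphism $h$ fixes every element of $\Xi$ setwise and preserves the orientation of each, so $h$ acts trivially on the star $\mathrm{st}(v)$ and $g=\phi(h)=1$. Hence $H$ acts freely on the set $\Xi^{(2)}=\{E_1,\dots,E_k\}$ of $2$-cells, so this set decomposes into $H$-orbits each of cardinality $|H|$. I would fix a representative in each orbit, writing $E'_i$ for the representative of the orbit of $E_i$ and $a_i\in H$ for the unique element (unique by freeness) with $a_i\cdot E'_i=E_i$.

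The construction then proceeds in two stages. In the first stage I would build a genuine smooth action $s_0\colon H\to\mathcal{S}_V(f)|_{N(V)}$ on a closed $G_v$-invariant neighborhood $N(V)$ of the special component $V$, by $f$-preserving diffeomorphisms inducing the given action of $H$ on $\mathrm{st}(v)$ and satisfying $\phi\circ s_0=\id_H$. Using condition (P), the critical points on $V$ carry standard smooth models, so the symmetry recorded by $G_v^{loc}$ can be realized by an explicit smooth $f$-preserving action on a model neighborhood, which is then transported to $N(V)$ by an $f$-preserving diffeomorphism. This prescribes for every $i$ an orientation-preserving boundary diffeomorphism $s_0(a_i)\colon\partial E'_i\to\partial E_i$. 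In the second stage I extend across the disks: choosing for each $i$ a diffeomorphism $\Psi_i\colon E'_i\to E_i$ extending $s_0(a_i)|_{\partial E'_i}$ over the disk (possible, and $f$-preserving, precisely because the boundary map preserves orientation, by an Alexander-type filling adapted to $f$), I set $s(g)|_{E_i}=\Psi_{g\cdot i}\circ\Psi_i^{-1}$. Since $E'_i=E'_{g\cdot i}$ for cells in one orbit, the identity $\Psi_{g_1g_2\cdot i}\circ\Psi_i^{-1}=(\Psi_{g_1g_2\cdot i}\circ\Psi_{g_2\cdot i}^{-1})\circ(\Psi_{g_2\cdot i}\circ\Psi_i^{-1})$ makes $s$ a homomorphism automatically, and because $s_0$ is a homomorphism the boundary restriction of $s(g)|_{E_i}$ equals $s_0(g)$, so the disk maps glue with $s_0$ into a global $f$-preserving diffeomorphism with $\phi(s(g))=g$.

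I expect the main obstacle to be the first stage: turning the purely combinatorial symmetry $H\le G_v^{loc}$ into an honest smooth \emph{group action} near $V$ that exactly preserves $f$. Realizing a single element of $H$ by some $f$-preserving diffeomorphism is routine, but arranging that the chosen realizations compose correctly, so that $s_0$ is a homomorphism and not merely a set-theoretic section, is the delicate point, and it is here that the symmetric model supplied by (P), together with the freeness and orientation constraints of (C), must be used. Once the action near $V$ is in place, the extension over the disks and the verification that $\phi\circ s=\id_H$ are formal consequences of the free-orbit bookkeeping set up above.
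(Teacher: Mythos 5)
First, a point of order: the paper you were checked against does not actually prove this statement. Proposition~\ref{th:com-act-tree} is imported verbatim, with attribution, from the author's earlier paper \cite{Feshchenko:2016:ActTree} (Theorem~2.2 there), so there is no in-paper proof to compare you with; your attempt has to stand on its own. On its own terms it is an outline with its central step missing. What you do correctly: condition (C) does imply that $H$ acts freely on the $2$-dimensional elements of $\Xi$ (edges of $\mathrm{st}(v)$ correspond bijectively and equivariantly to the $2$-cells, and an automorphism of $\mathrm{st}(v)$ fixing every edge is trivial), and your Stage~2 bookkeeping $s(g)|_{E_i}=\Psi_{g\cdot i}\circ\Psi_i^{-1}$ formally yields a homomorphism once the maps $\Psi_i$ exist and glue. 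But Stage~1 --- the existence of an honest smooth $f$-preserving \emph{group action} of $H$ on a neighborhood $N(V)$ of $V$ inducing the prescribed combinatorial action --- is precisely the content of the theorem localized near $V$, and you assert it instead of proving it. Condition (P) gives a polynomial model of $f$ at each critical point \emph{separately}; it does not give an $H$-equivariant model of $f$ along the whole component $V$, nor any reason why local models at different critical points of $V$ can be chosen compatibly with the symmetry. The real difficulty is exactly the one you name and then defer: a set-theoretic section $g\mapsto h_g$ of $\phi$ always exists, and the obstruction to correcting it into a homomorphism is a cocycle with values in $\ker\phi$; killing that cocycle requires a concrete description of $\ker\phi$ near $V$ (in the cited literature this comes from Maksymenko's parametrization of such diffeomorphisms by shift functions along the Hamiltonian flow of $f$), and none of that machinery appears in your argument.

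Stage~2 has a gap of its own. Extending the boundary map $s_0(a_i)|_{\partial E'_i}$ over the disk is not an ``Alexander-type filling'': the Alexander trick produces \emph{some} extension, but nothing forces it to satisfy $f\circ\Psi_i=f|_{E'_i}$. To get an $f$-preserving extension you need the boundary map to agree, up to a correction that itself extends $f$-preservingly, with the restriction of an actual element of $\mathcal{H}$ carrying $E'_i$ to $E_i$ --- i.e.\ a compatibility between your Stage~1 action and the global elements of $\mathcal{H}$ that must be arranged, not assumed. Note also that the closure of a $2$-cell need not be a compact disk with circle boundary (it can meet $V$ in a graph, e.g.\ a wedge of circles), which is why arguments of this type in the literature, and in Section~4 of the present paper, work with $f$-regular neighborhoods $N$ of $V$ and the compact subsurfaces bounded by \emph{regular} level curves, rather than with closures of cells. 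So the skeleton of your proposal is reasonable, but both the construction near $V$ and the $f$-compatible filling of the disks --- which is where all the actual work in \cite{Feshchenko:2016:ActTree} lies --- are missing.
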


\section{Proof of statement (1) of Theorem \ref{thm:main}}
Let $f\in\FFF(T^2)$ be such that its KR-graph $\Gamma_f$ is a tree, and $v$ is the special vertex of the graph $\Gamma_f$, and $V$ be the connected component of $f$, which corresponds to $v.$ We need to show that $G_v^{loc} \cong \ZZZ_n\times \ZZZ_{nm}$ for some $n,m\in\mathbb{N}.$

Note that from Lemma 2.4 follows that $V$ gives the partition of $T^2$: $0$-dimensional and $1$-dimensional cells are vertices and edges of $V$ respectively, and $2$-dimensional cells are connected component of $T^2\setminus V.$

From \cite[Theorem~7.1]{Maksymenko:AGAG:2006} follows that for each $h\in\ker(r\circ \rho)$ the following conditions hold:
\begin{enumerate}
	\item $h(e) = e$ for each cell $e$,
	\item the map $h:e\mapsto h(e)$ preserves orientations of cells $e$ of dimension $\dim e \geq 1.$
\end{enumerate}
Let $h\in\mathcal{S}'(f)$ be a diffeomorphism. According to \cite[Proposition~5.4]{Maksymenko:ProcIM:ENG:2010} either all cells are $h$-invariant, or the number of invariant cells under $h$ is equal to Lefschetz number $L(h)$. Since $h$ is isotopic to the identity map, it follows that $L(h) = \chi(T^2) = 0$. Thus, <<combinatorial>> action of $h$ on the set of all cells defines by the action of $h$ on any fixed $2$-cell, i.e., by the action $\rho(h)$ on the edge of $\mathrm{st}(v)$. Therefore, from  Proposition \ref{th:com-act-tree} follows that there exists the section $s: G_v^{loc}\to \mathcal{S}'(f)$ of the map $r\circ \rho$ such that $s(G_v^{loc})$ freely acts on $T^2.$ In particular, the quotient-map $q:T^2\to T^2/G_{v}^{loc}$is the covering map, and hence, $T^2/G_v^{loc}$ is  either $2$-torus $T^2$, or Klein bottle. But since $G_v^{loc}$-action on $T^2$ is the action by diffeomorphisms which preserve orientation and are isotopic to $\id_{T^2}$, it follows that the quotient-map $T^2/G_v^{loc}$ is a torus. In particular, we have the following short exact sequence:
$$
1\longrightarrow \pi_1 T^2\xrightarrow{~~q~~} \pi_1 T^2/G_v^{loc}\longrightarrow G_v^{loc} \longrightarrow 1.
$$
Since $q$ is the monomorphism, it follows that the proposition (1) of Theorem \ref{thm:main} is the consequence of the following result:
\begin{lemma}[Chapter E, \cite{Pont}]
	Let $A,B$ be free abelian group of the rank $2$, and $q: A\to B$ be an inclusion. Then there exists $\mathsf{L},\mathsf{M}\in A$ and $X, Y\in B$ such that $A = \langle \mathsf{L},\mathsf{M}\rangle,$ $B = \lambda X,Y,\rangle$, and 
	$$
	q(\mathsf{L}) = nX,\qquad q(\mathsf{M}) = mnY
	$$
	for some $n,m\in\mathbb{N}$, in particular $B/A \cong \ZZZ_n\times \ZZZ_{mn}.$
\end{lemma}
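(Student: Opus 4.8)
\emph{Plan.} The statement is the classical theorem on the Smith normal form of an integer matrix, equivalently the stacked-basis theorem for a finite-index subgroup of $\ZZZ^2$, so the plan is to reduce it to that normal form. First I would fix arbitrary bases of $A$ and $B$ and write the inclusion $q$ as a $2\times 2$ integer matrix $Q$. Because $q$ is injective and both $A$ and $B$ have rank $2$, the image $q(A)$ again has rank $2$, hence has finite index in $B$; equivalently $\det Q\neq 0$. This guarantees that the two invariant factors produced below are strictly positive, which is exactly what yields $n,m\in\mathbb{N}$ rather than merely nonnegative integers.

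The heart of the argument is the existence of the Smith normal form: there exist $U,V\in \mathrm{GL}_2(\ZZZ)$ with $UQV=\mathrm{diag}(d_1,d_2)$, where $d_1,d_2>0$ and $d_1\mid d_2$. I would obtain this by the usual Euclidean reduction: using integer row and column operations one drives the top-left corner down to the greatest common divisor $d_1$ of all entries, clears the remaining entries in the first row and column, and then arranges that $d_1$ divides the surviving entry $d_2$; the operations are recorded as the unimodular matrices $U$ (row operations, a change of basis in $B$) and $V$ (column operations, a change of basis in $A$). The divisibility $d_1\mid d_2$ is the only delicate point, and it is precisely what forces the two-cyclic-factor structure of the quotient.

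Interpreting the unimodular matrices as changes of basis, the columns of $V$ give a new basis $\mathsf{L},\mathsf{M}$ of $A$ and the columns of $U^{-1}$ give a new basis $X,Y$ of $B$, with respect to which $q(\mathsf{L})=d_1X$ and $q(\mathsf{M})=d_2Y$. Setting $n:=d_1$ and $m:=d_2/d_1$, which is a positive integer since $d_1\mid d_2$, gives $q(\mathsf{L})=nX$ and $q(\mathsf{M})=mnY$, as required. Finally, since $B=\langle X,Y\rangle$ and $A=\langle nX,\,mnY\rangle$, the quotient splits factor by factor,
$$
B/A\;\cong\;(\ZZZ X/\ZZZ\,nX)\oplus(\ZZZ Y/\ZZZ\,mnY)\;\cong\;\ZZZ_n\times \ZZZ_{mn}.
$$
I expect the main obstacle to be purely the bookkeeping behind the normal form, namely proving that the Euclidean reduction can always be pushed until the top-left entry divides every other entry so that $d_1\mid d_2$ holds. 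An alternative that avoids matrices altogether is to define $n$ intrinsically as the least positive value of $\varphi(A)$ over all $\varphi\in \mathrm{Hom}(B,\ZZZ)$ and to build the stacked basis from a functional achieving this minimum; either route completes the proof.
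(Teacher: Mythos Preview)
Your argument via the Smith normal form is correct and is the standard proof of this stacked-basis theorem for rank-$2$ free abelian groups; the bookkeeping you flag (pushing the Euclidean reduction until $d_1\mid d_2$) is routine over $\ZZZ$ since it is a Euclidean domain.

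As for comparison: the paper does not give its own proof of this lemma. It is stated with the citation [Chapter~E, \emph{Pontryagin}] and used as a black box to finish the proof of statement~(1) of Theorem~2.5. So there is nothing to compare against; your Smith-normal-form argument is exactly the kind of proof that reference would supply.
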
 

\section{Proof of statement (2) of Theorem \ref{thm:main}}
\subsection*{Step 1. Choice of generators of $\pi_1 T^2$ and $\pi_1 T^2/G_v^{loc}$} 
Fix a point $y\in T^2$. Let $z = q(y)\in T^2/G_v^{loc}$. Then we have the following diagram:
$$
\xymatrix{
	0 \ar[r] & \pi_1 (T^2, y)\ar@{=}[d] \ar[r]^q & \pi_1(T^2/G_v^{loc}, z)  \ar@{=}[d]\ar[r]^{\partial} & G_v^{loc} \ar@{=}[d]\ar[r] & 0\\
	0\ar[r] & \ZZZ^2\ar[r]^q & \ZZZ^2 \ar[r]^{\partial} & \ZZZ_n\times \ZZZ_{nm} \ar[r] & 0
	}
$$
where $q: \ZZZ^2\to \ZZZ^2$ and $\partial:\ZZZ^2\to \ZZZ_n\times \ZZZ_{nm}$ are defined by the formulas:
$$
q(\lambda,\mu) = (n\lambda, mn\mu),\qquad \partial(x,y) = (x\,\mathrm{mod}\, n, y\,\mathrm{mod}\, nm).
$$

Let $\widehat{X},\widehat{Y}:T^2/G_v^{loc}\times [0,1]\to T^2/G_v^{loc}$ be isotopies such that $\widehat{X}_0 = \widehat{X}_1 = \widehat{Y}_0 = \widehat{Y}_1 = \id_{T^2/G_v^{loc}}$, and $\widehat{X}_s\circ \widehat{Y}_t = \widehat{Y}_t\circ \widehat{X}_s$ for all $s,t \in[0,1]$. Moreover loops $\widehat{X}_z, \widehat{Y}_z:I\to T^2/G_v^{loc}$, defined by  $\widehat{X}_z(t) = \widehat{X}(z,t)$ and $\widehat{Y}_z(t) = \widehat{Y}(z,t)$, represent elements $[\widehat{X}_z] = (1,0)$ and $[\widehat{Y}_z] = (0,1)$ in $\pi_1(T^2/G_v^{loc},z).$

Extend $\widehat{X}$ and $\widehat{Y}$ to maps $X,Y:T^2/G_v^{loc}\times \RRR\to T^2/G_v^{loc}$ by formulas:
$$
X(x,t) = \widehat{X}(x,t\,\mathrm{mod}\,1),\qquad Y(x,t) = \widehat{Y}(x, t\,\mathrm{mod}\, 1).
$$

Let $\mathsf{L},\mathsf{M}: T^2\times \RRR\to T^2$ be the unique lifftings of $X$ and $Y$ respectively with respect to the map $q$ such that $\mathsf{L}$ and $\mathsf{M}$ commutes and $\mathsf{L}_0 = \mathsf{M}_0 = \id_{T^2},$ i.e., $X_t\circ q = q\circ L_t$ and $Y_t\circ q = q\circ \mathsf{M}_t$.

Let $s: G_v^{loc}\to \mathcal{S}'(f)$ be a section of  the map  $r\circ \rho,$ see Proposition \ref{th:com-act-tree}. Since by (1) of Theorem \ref{thm:main}, $G_v^{loc}\cong \ZZZ_n\times \ZZZ_{mn}$ for some $n,m\in\mathbb{N}$, it follows that $\mathsf{L}_t\circ \mathsf{M}_{t'} = \mathsf{M}_{t'}\circ \mathsf{L}_t$ for all $t, t'\in\RRR$ and 
$$
\mathsf{L}_k = s(k\,\mathrm{mod}\,n,0),\qquad \mathsf{M}_k = s(0,k\,\mathrm{mod}\, mn)
$$
for all $k\in\ZZZ.$ In particular $\mathsf{L}_{kn} = \mathsf{M}_{kmn} = \id_{T^2},$ $k\in\ZZZ^2,$ and loops $\mathsf{L}_z:[0,n]\to T^2$ and $\mathsf{M}_z:[0,mn]\to T^2$ represent elements $[\mathsf{L}_z] = (1,0)$ and $[\mathsf{M}_z] = (0,1)$ in $\pi_1(T^2,y)\cong \ZZZ^2.$

Since $G_v^{loc}$ freely acts on $T^2$, it follows that connected components of $\overline{T^2 - V}$ we can enumerate by three indexes $D_{ijk}$ such that $i = 1,2,\ldots, r,$ $j = 0,1,\ldots, n-1$, and $k = 0,1,\ldots, mn-1$. Moreover if $\gamma = (a,b)\in\ZZZ_n\times\ZZZ_{mn}\cong G_v^{loc},$ then 
$$
\gamma(D_{ijk}) = D_{i,j+a, k+b},
$$ 
where second index takes modulo $n$, and third by modulo $mn.$

We put $\mathcal{S}_{ijk} = \pi_0\mathcal{S}'(f|_{D_{ijk}},\partial D_{ijk})$ and $\mathcal{S} = \prod_{i = 1}^{r}\prod_{j = 0}^{n-1}\prod_{k = 0}^{mn-1}\mathcal{S}_{ijk}$. Next, define the homomorphism 
$$
\tau:\mathcal{S}\to \mathrm{Map}(G_v^{loc}, \prod_{i = 1}^r \mathcal{S}_{i00})
$$
by the formula: if $\alpha = (h_{ijk})\in \mathcal{S}$, then the map
$$
\tau(\alpha):\ZZZ_n\times \ZZZ_{mn}\to \prod_{i= 1}^r\mathcal{S}_{i00}
$$
is given by the formula
\begin{equation}
\tau(\alpha)(a,b) = (\mathsf{M}_k^{-1}\circ \mathsf{L}_j^{-1}\circ h_{ijk}\circ \mathsf{L}_j\circ \mathsf{M}_k,\; i = 1,2,\ldots, r),
\end{equation}
where $(a,b)\in \ZZZ_n\times \ZZZ_{mn}\cong G_v^{loc}.$ After direct verifying we see that $\tau$ is the isomorphism.

\subsection*{Step 2. Epimorphism $\psi$} Let $h:I\to \mathcal{D}_{\id}(T^2)$ be a loop in $\mathcal{D}_{\id}(T^2)$ such that $h(0) = h(1) = \id_{T^2},$ i.e., $h$ is an isotopy $h: T^2\times I\to T^2$ of the torus. Let $x$ be a point in $T^2.$ Then $h_x:\{x\}\times I\to T^2$ is a loop in $T^2$ with the starting point $x$. Define the map $\ell:\pi_1\mathcal{D}_{\id}(T^2)\to\pi_1T^2$ by the formula: $\ell([h]) = [h_x]\in \pi_1T^2.$
It is known that the map $\ell$ is the isomorphism, see \cite{EarleEells:DG:1970,EarleSchatz:DG:1970,Gramain:ASENS:1973}.

\begin{lemma}\label{lm:psi}
	There exists the epimorphism $\psi:\pi_1(\mathcal{D}_{\id}(T^2),\mathcal{S}'(f))\to \pi_1 T^2/G_v^{loc}$ such that the following diagram is commutative
	\begin{equation}\label{diag:psi}
	\xymatrix{
		1\ar[r] & \pi_1\mathcal{D}_{\id}(T^2) \ar[d]_{\ell}^{\cong} \ar[r] & \pi_1(\mathcal{D}_{\id}(T^2),\mathcal{S}'(f)) \ar[d]^{\psi} \ar[r] & \pi_0\mathcal{S}'(f) \ar[d]^{\widehat{\rho}_0} \ar[r] & 1\\
		1\ar[r] & \pi_1 T^2 \ar[r]^q & \pi_1 T^2/G_v^{loc} \ar[r] & G_v^{loc} \ar[r] & 1
		}
	\end{equation}
	and rows are exact sequences.
\end{lemma}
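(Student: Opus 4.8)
The plan is to construct $\psi$ by following the trajectory of a single, carefully chosen point under a relative loop, projecting this trajectory to $T^2/G_v^{loc}$, and declaring its homotopy class to be the value of $\psi$. I would represent an element of $\pi_1(\mathcal{D}_{\id}(T^2),\mathcal{S}'(f))$ by a path $\omega\colon(I,0,1)\to(\mathcal{D}_{\id}(T^2),\id_{T^2},\mathcal{S}'(f))$, so that $\omega(0)=\id_{T^2}$ and $\omega(1)\in\mathcal{S}'(f)$, and write $\omega_y(t)=\omega(t)(y)$ for the trajectory of a point $y$. The essential point is that $y$ must not be taken generic: I would fix $y$ to be a $0$-cell (i.e.\ a critical point of $f$) of the special component $V=p_f^{-1}(v)$, which exists because $V$ is a critical component of a level set of $f$. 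With this choice I define $\psi([\omega])=[\,q\circ\omega_y\,]$.

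The first and main thing to verify — and the step I expect to carry the whole construction — is that $q\circ\omega_y$ is genuinely a loop based at $z$, equivalently that $q(\omega(1)(y))=z=q(y)$. Put $g=\widehat\rho_0([\omega(1)])\in G_v^{loc}$ and let $s(g)\in\mathcal{S}'(f)$ be the value of the section of Proposition~\ref{th:com-act-tree}. Since every element of $\mathcal{S}'(f)$ fixes the special vertex $v$ (it is the unique vertex of $\Gamma_f$ with the property of Lemma~\ref{lm:unique_vertex}), both $\omega(1)$ and $s(g)$ lie in $\mathcal{S}_V(f)$ and have the same image $g$ under $r\circ\rho$; hence $s(g)^{-1}\circ\omega(1)\in\ker(r\circ\rho)$. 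By the combinatorial description of this kernel (\cite[Theorem~7.1]{Maksymenko:AGAG:2006}), every element of $\ker(r\circ\rho)$ fixes each cell of the partition induced by $V$; in particular it fixes the $0$-cell $y$ pointwise. Therefore $\omega(1)(y)=s(g)(y)$, and since $s(g)$ belongs to the free action $s(G_v^{loc})$ realizing $G_v^{loc}$ as the deck group of $q$, we get $q(\omega(1)(y))=q(s(g)(y))=q(y)=z$. This is exactly the place where the choice of $y$ as a fixed $0$-cell, Theorem~7.1, and the section are all used together.

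Next I would check that $\psi$ is well defined and a homomorphism, and that the two squares of (\ref{diag:psi}) commute. For well-definedness, a relative homotopy $\Omega_s$ between two representatives keeps $\Omega_s(1)$ inside one connected component of $\mathcal{S}'(f)$, so $r\circ\rho(\Omega_s(1))=g$ is constant in $s$; repeating the argument of the previous paragraph shows that $(s,t)\mapsto q(\Omega_s(t)(y))$ is a based homotopy of loops, so $\psi([\omega])$ depends only on the relative class. The left square commutes tautologically: for a loop $\omega$ with $\omega(1)=\id_{T^2}$ one has $\psi([\omega])=[q\circ\omega_y]=q_*\ell([\omega])$ by the definition of the Earle--Eells isomorphism $\ell$ (\cite{EarleEells:DG:1970,EarleSchatz:DG:1970,Gramain:ASENS:1973}). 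The right square commutes by covering-space monodromy: the lift of $q\circ\omega_y$ starting at $y$ is $\omega_y$ itself, ending at $\omega(1)(y)=s(g)(y)$, so the monodromy of $\psi([\omega])$ is the deck transformation $s(g)$, i.e.\ $g=\widehat\rho_0([\omega(1)])$. The homomorphism property is the one genuinely computational point: using the product formula $[\omega_1][\omega_2]=[\omega_1\ast(\omega_1(1)\cdot\omega_2)]$ for the lifted concatenation, together with $q\circ s(g_1)=q$ and the fact that the target $\pi_1(T^2/G_v^{loc},z)\cong\ZZZ^2$ is abelian, a direct homotopy in $T^2/G_v^{loc}$ gives $\psi([\omega_1][\omega_2])=\psi([\omega_1])+\psi([\omega_2])$.

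Finally, both rows of (\ref{diag:psi}) are exact — the top one is (\ref{eq:hom-main-seq}) and the bottom one is the defining sequence of the covering $q\colon T^2\to T^2/G_v^{loc}$ recorded in Step~1 — the map $\ell$ is an isomorphism, and $\widehat\rho_0$ is surjective because $G_v^{loc}=r(\rho(\mathcal{S}'(f)))$ and $\rho$ is locally constant. A four-lemma diagram chase on (\ref{diag:psi}) then forces $\psi$ to be an epimorphism, completing the proof. I expect no difficulty here beyond bookkeeping; the real work is concentrated in the loop-closure argument of the second paragraph, which is what makes the na\"ive ``project the trajectory'' definition actually produce an element of $\pi_1(T^2/G_v^{loc},z)$.
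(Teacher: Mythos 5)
Your proposal is correct and is essentially the paper's own argument: the paper fixes a vertex $z$ of $V$, defines $\psi_0(h)=q(h(z))$ as a map of triples $(\mathcal{D}_{\id}(T^2),\mathcal{S}'(f),\id_{T^2})\to(T^2/G_v^{loc},z,z)$ (so that at the level of relative loops $\psi([\omega])=[q\circ\omega_z]$, exactly your formula), justifies that $\mathcal{S}'(f)$ lands in the fiber over $z$ by the same observation that the $\mathcal{S}'(f)$- and $G_v^{loc}$-actions agree on vertices of $V$, and concludes surjectivity by the five-lemma. The only difference is presentational: by defining the map on spaces first, the paper gets well-definedness and the homomorphism property for free from functoriality, whereas you verify them by hand.
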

\begin{proof}
	Fix any vertex $z$ of $V$ and define the map $\psi_0:\mathcal{D}_{\id}(T^2)\to T^2/G_v^{loc}$ by $\psi_0(h) = q(h(z)),$ where $h\in\mathcal{D}_{\id}(T^2)$ and $q:T^2\to T^2/G_v^{loc}$ is a quotient-map induced by free action of $G_v^{loc}$ on $T^2$. Obviously, $\psi_0$ is continuous map. Since $G_v^{loc}$-action  and $\mathcal{S}'(f)$-action coincide on vertices of $V,$ it follows that $\psi_0(h)$ belongs to some $G_v^{loc}$-orbit of the point $z$ for $h\in\mathcal{S}'(f)$. Then the map $\psi_0$ induces the map of triple
	$$
	\psi_0: (\mathcal{D}_{\id}(T^2),\mathcal{S}'(f),\id_{T^2})\to (T^2/G_v^{loc},z,z),\qquad \psi_0(\widehat{h}) = q(\widehat{h}(z)).
	$$
	In particular, $\phi_0$ induces the homomorphism
	$$
	\phi: \pi_1(\mathcal{D}_{\id}(T^2),\mathcal{S}'(f),\id_{T^2}) \to \pi_1 (T^2/G_v^{loc},z,z)
	$$
	Since rows in diagram \ref{diag:psi} are exact sequences, the map $\ell$ is the isomorphism, the map $\widehat{\rho}_0$ is the epimorphism, it follows that, by 5-lemma, the map $\phi$ is the epimorphism.
\end{proof}

\subsection*{Step 3. Kernel of $\psi$} Let $f(V) = c,$ $\epsilon > 0$ and $N$ be a connected component of $f^{-1}([c-\epsilon, c+ \epsilon])$ contains $V.$ We will call $N$ the $f$-regular neighborhood of $V$. Recall that $\mathcal{S}'(f,N):=\{h\in \mathcal{S}'(f)\,|\, h|_{N} = \id_{N}$.

The following lemma describes the kernel of $\psi.$
\begin{lemma}
	There exist isomorphisms between these five groups
	$$
	\ker\psi \xrightarrow{~~\zeta~~} \ker \widehat{\rho}_0 \xleftarrow{~~\iota~~}\pi_0\mathcal{S}'(f, N)\xrightarrow{~~\sigma~~}\mathcal{S}\xrightarrow{~~\tau~~}\mathrm{Map}(G_v^{loc}, \prod_{i = 1}^r\mathcal{S}_{i00})
	$$
\end{lemma}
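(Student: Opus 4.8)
The plan is to establish the four isomorphisms one at a time, treating $\tau$ as already settled (it was shown to be an isomorphism by direct verification in Step~1). The remaining three maps are of genuinely different types: $\zeta$ is purely homological, $\sigma$ is a cut-and-glue (restriction to the complementary disks) map, and $\iota$ is induced by an inclusion of diffeomorphism groups and is where the real work lies.

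For $\zeta$ I would argue formally. Diagram~\eqref{diag:psi} is a morphism of short exact sequences whose middle vertical arrow is $\psi$ and whose outer vertical arrows are $\ell$ and $\widehat{\rho}_0$. Since $\ell$ is an isomorphism, $\ker\ell=\operatorname{coker}\ell=0$, so the snake lemma (equivalently, a direct diagram chase, which avoids any nonabelianness of $\pi_0\mathcal{S}'(f)$) produces an isomorphism $\zeta:\ker\psi\to\ker\widehat{\rho}_0$ that is simply the restriction to $\ker\psi$ of the boundary map $\pi_1(\mathcal{D}_{\id}(T^2),\mathcal{S}'(f))\to\pi_0\mathcal{S}'(f)$; commutativity of the right-hand square shows its image lands in $\ker\widehat{\rho}_0$. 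This step is routine.

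For $\sigma$ I would use that any $h\in\mathcal{S}'(f,N)$ is the identity on $N\supset V$, hence fixes $V$ pointwise and therefore, by the combinatorial analysis of \S3 (every $h\in\mathcal{S}'(f)$ isotopic to $\id_{T^2}$ that fixes one $2$-cell fixes all of them and preserves their orientations), leaves each disk $D_{ijk}$ invariant. Restricting gives $h|_{D_{ijk}}\in\mathcal{S}'(f|_{D_{ijk}},\partial D_{ijk})$, and since $h$ is the identity on the whole collar $N\cap D_{ijk}$ of $\partial D_{ijk}$ this is well defined; passing to $\pi_0$ yields $\sigma([h])=([h|_{D_{ijk}}])\in\mathcal{S}$. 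The inverse glues a tuple of disk diffeomorphisms, each fixing a neighborhood of its boundary, into a diffeomorphism of $T^2$ equal to the identity on $V$; each factor is isotopic to the identity rel boundary (the disk stabilizers have contractible identity components by Theorem~1.1(4)), and after adjusting collars the product structure of $f$ near $V$ lets one arrange it to be the identity on all of $N$. Because all factor isotopies are taken rel a neighborhood of $\partial D_{ijk}$, they glue to isotopies fixing $N$, so $\sigma$ is a bijection on $\pi_0$.

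The crux is $\iota$, induced by the inclusion $\mathcal{S}'(f,N)\hookrightarrow\mathcal{S}'(f)$; its image lies in $\ker\widehat{\rho}_0$ because a diffeomorphism that is the identity on $N\supset V$ induces the trivial germ at $v$. Surjectivity amounts to showing that any $h\in\mathcal{S}'(f)$ with $\widehat{\rho}_0([h])=1$ can be isotoped, \emph{within} $\mathcal{S}'(f)$, to a diffeomorphism that is the identity on the $f$-regular neighborhood $N$; injectivity amounts to showing that an isotopy $\id_{T^2}\simeq h$ living in $\mathcal{S}'(f)$ can be replaced by one living in $\mathcal{S}'(f,N)$. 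I expect this to be the main obstacle: it is exactly the point where one must invoke Maksymenko's deformation technique for straightening a diffeomorphism that preserves a critical level set — making it standard (here, the identity) on a neighborhood of $V$ without leaving its path component in $\mathcal{S}'(f)$ — rather than a formal diagram argument. Once $\iota$ is an isomorphism, the chain $\ker\psi\xrightarrow{\zeta}\ker\widehat{\rho}_0\xleftarrow{\iota}\pi_0\mathcal{S}'(f,N)\xrightarrow{\sigma}\mathcal{S}\xrightarrow{\tau}\mathrm{Map}(G_v^{loc},\prod_{i=1}^r\mathcal{S}_{i00})$ yields the desired identifications.
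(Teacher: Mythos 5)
Your overall route coincides with the paper's: $\zeta$ comes from a diagram chase on diagram (\ref{diag:psi}) using that $\ell$ is an isomorphism (the paper packages this as a $3\times 3$-lemma and takes $\zeta=\partial\circ\lambda_1^{-1}|_{\ker\psi}$, i.e.\ exactly the restricted boundary map you describe); $\sigma$ is the restriction-to-the-disks isomorphism $h\mapsto (h|_{D_{ijk}})_{ijk}$; $\tau$ was already handled in Step 1; and $\iota$ is induced by the inclusion $\mathcal{S}'(f,N)\hookrightarrow\ker(r\circ\rho)$. You have also correctly located the only nontrivial point, namely that this inclusion induces a bijection on $\pi_0$.

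That point, however, is exactly where your write-up stops being a proof: you appeal to ``Maksymenko's deformation technique'' to straighten an $h\in\ker(r\circ\rho)$ to the identity on $N$ without leaving its path component, but you do not say what the deformation is, and this construction is the entire content of the paper's argument for $\iota$. Concretely, the paper proves more than a $\pi_0$-bijection: it builds a homotopy $H:\ker(r\circ\rho)\times I\to\ker(r\circ\rho)$ with $H_0=\mathrm{id}$, $H_t(\mathcal{S}'(f,N))\subset\mathcal{S}'(f,N)$ for all $t$, and $H_1(\ker(r\circ\rho))\subset\mathcal{S}'(f,N)$, which yields surjectivity and injectivity on $\pi_0$ simultaneously. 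The construction: take the Hamiltonian flow $\mathbf{F}$ of $f$ and a slightly larger $f$-regular neighborhood $N'$ with $\overline{N}\subset\mathrm{Int}(N')$; by Maksymenko's shift-function result, every $h\in\ker(r\circ\rho)$ equals $\mathbf{F}_{\beta_h}$ on $N'$ for a unique $\beta_h\in C^{\infty}(N')$ depending continuously on $h$ and vanishing on $N$ whenever $h$ is fixed there; damp $\beta_h$ by a cutoff $\varepsilon$ constant on flow orbits to obtain $\alpha_h$ supported in $N'$, and set $H_t(h)=h\circ\mathbf{F}_{t\alpha_h}^{-1}$. Because this formula is canonical and continuous in $h$, the injectivity half (replacing an isotopy in $\ker(r\circ\rho)$ by one in $\mathcal{S}'(f,N)$) comes for free, rather than requiring the separate surjectivity/injectivity arguments you sketch. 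Your outline names the right tool, but without supplying or precisely citing this construction the central isomorphism $\iota$ remains unproved.
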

\begin{proof}
	(1) First we construct the isomorphism $\zeta:\ker\psi\to \ker\widehat{\rho}_0.$ Consider the following diagram with exact rows and columns:
	$$
	\xymatrix{
	&& 1 \ar[d] & 1 \ar[d] & \\
	&& \pi_1\mathcal{D}_{\id}(T^2) \ar[d] \ar[r]^{\ell}_{\cong} & \pi_1 T^2 \ar[d] & 	\\
	1 \ar[r] & \ker\psi  \ar[d]_{\zeta}^{\cong} \ar[r] & \pi_1(\mathcal{D}_{\id}(T^2),\mathcal{S}'(f)) \ar[d]_{\partial\circ \lambda_1^{-1}} \ar[r] & \pi_1T^2/G_v^{loc} \ar[r] \ar[d] & 1\\
	1 \ar[r] & \ker\widehat{\rho}_0 \ar[r] & \pi_0\mathcal{S}'(f)  \ar[r]^{\widehat{\rho}_0} \ar[d] & G_v^{loc}  \ar[d] \ar[r] & 1\\
	&& 1  & 1  & 
		}
	$$
	Since $\ell$ is the isomorphism, it follows that by $3\times 3$-lemma, the homomorphism $\zeta = \partial\circ \lambda_1^{-1}|_{\ker\psi}$ is the isomorphism.
	
	(2) Note that the following map is the isomorphism
	$$
	\sigma:\mathcal{S}'(f,N)\cong \prod_{i,j,k}\mathcal{S}'(f|_{D_{ijk}},\partial D_{ijk}),\qquad  \sigma(h) = (h|_{D_{ijk}})_{ijk},
	$$
	which induces the an isomorphism
	$$
	\sigma:\pi_0\mathcal{S}'(f,N)\cong \prod_{i = 1}^r\prod_{j = 0}^{n-1}\prod_{k = 0}^{nm-1}\mathcal{S}_{ijk} = \mathcal{S}.
	$$
	
	(3) It sufficient to show that the inclusion $\iota:\mathcal{S}'(f,N)\to \ker(r\circ \rho)$ is a homotopy equivalence. Hence it induces the isomorphism $\iota:\pi_0\mathcal{S}'(f,N)\to\pi_0\ker(r\circ\rho) = \ker\widehat{\rho}_0.$
	Now we show that there exists an isotopy $H:\ker(r\circ \rho)\times I\to \ker(r\circ\rho)$ such that the following conditions hold:
	\begin{itemize}
		\item[(i)] $H_0 = \id_{T^2};$
		\item[(ii)] $H_t(\mathcal{S}'(f,N))\subset \mathcal{S}'(f,N)$ for all $t\in I;$
		\item[(iii)] $H_1(\ker(r\circ \rho))\subset \mathcal{S}'(f,N).$
	\end{itemize}
	
	Let $F$ be the Hamiltonian vector field of the function $f\in\FFF(T^2),$ $\mathbf{F}:T^2\times \RRR\to T^2$ be the flow of $F,$ and $N$, $N'$ be $f$-regular neighborhoods of $V$ such that $\overline{N}\subset \mathrm{Int}(N').$ For each smooth function $\gamma:T^2\to \RRR$ define the map $\mathbf{F}_{\gamma}: T^2\to T^2$ by the formula $\mathbf{F}_{\gamma}(x) = \mathbf{F}(x,\gamma(x)).$
	
	From \cite[Claim 1]{Maksymenko:ProcIM:ENG:2010} follows that for each $h\in\ker(r\circ \rho)$ there exists a unique smooth function $\beta_h\in C^{\infty}(N')$ such that $h = \mathbf{F}_{\beta_h}$ on $N',$ i.e., $h(x) = \mathbf{F}(x,\beta_h(x)),$ $x\in N'.$ Moreover the map $\widehat{s}:\ker(r\circ \rho)\to C^{\infty}(N')$ defined by $\widehat{s}(h) = \beta_h$ is continuous map with respect $C^{\infty}$-topologies. Furthermore, if $h$ is fixed on $N,$ then $\beta_h = 0$ on $N.$
	
	Extend the function $\beta_h$ to the smooth function $\alpha_h\in C^{\infty}(M)$ such that $\alpha_h|_{N} = \beta_h$ and $\alpha_h = 0$ on $T^2-N'$ in the following way. Let $\varepsilon: T^2\to [0,1]$ be a smooth function on $T^2$ such that 
	\begin{enumerate}
		\item $\varepsilon$ is constant on orbits of flow $\mathbf{F}$;
		\item $\varepsilon = 1$ on $N$;
		\item $\varepsilon = 0$ on $T^2-N'.$
	\end{enumerate}
	Define $\alpha_h = \varepsilon\beta_h$ on $N'$ and $\alpha_h = 0$ on $T^2-N'.$ Obviously, the correspondence $h\mapsto \alpha_h$ is the continuous map $\alpha:\ker(r\circ \rho)\to C^{\infty}(T^2).$ Furthermore from condition (1) on the function $\varepsilon$ follows that the map $\mathbf{F}_{t\alpha_h}:T^2\to T^2$ defined by the formula $\mathbf{F}_{t\alpha_h}(x) = \mathbf{F}(x,t\alpha_h(x))$ is the diffeomorphism for all $t\in I$, see \cite[Claim~4.14.1]{Maksymenko:AGAG:2006}. From conditions (2) and (3) we have that
	$$
	\mathbf{F}(x,\alpha_h(x)) = \begin{cases}
	h(x), & x\in N,\\
	x, & x\in T^2 - N'.
	\end{cases}
	$$
	Define the isotopy $H:\ker(r\circ \rho)\times I \to \ker(r\circ \rho)$ by the formula $H(x, t) = h\circ \mathbf{F}^{-1}_{t\alpha_h}.$ It remains to prove that $H$ satisfies conditions (i)-(iii). Indeed,
	
	(i) $H_0(h) = h\circ \mathbf{F}_0^{-1} = h,$ i.e., $H_0 = \id|_{\ker(r\circ \rho)};$
	
 	(ii) Consider that $h\in \mathcal{S}'(f,N)$. Then $\beta_h = t\alpha_t = 0$ in $N$, and hence, $\mathbf{F}_{t\alpha_h}|_{N} = \id_N$ for all $t\in I.$  In particular $H_t(h)|_{N} = h|_N = \id_N.$
 	
 	(iii) $H_1(h)|_{N} = h\circ \mathbf{F}^{-1}|_{N} = h\circ h^{-1}|_{N} = \id_N.$\\
 	So lemma is proved.
\end{proof}

\subsection*{Step 4. Defining the map $\xi$} Define the map 
$$
\xi:\mathrm{Map}(G_v^{loc}, \prod_{i = 1}^r \mathcal{S}_{i00})\rtimes \pi_1(T^2/G_v^{loc})\to \pi_1(\mathcal{D}_{\id}(T^2),\mathcal{S}'(f),\id_{T^2})
$$
by the following way. Let $\alpha:\ZZZ_n\times \ZZZ_{mn}\cong G_v^{loc}\to \prod_{i = 1}^r\mathcal{S}_{i00}$ be any map. For each triple $(i,j,k)$ we chose $h_{ijk}\in\mathcal{S}'(f|_{D_{i00}},\partial D_{i00})$ such that 
$$
\alpha(i,j) = ([h_{1jk}], [h_{2jk}],\ldots, [h_{rjk}]),
$$
and let $h_{ijk}^t: D_{i00}\to D_{i00}$ be any isotopy between $h^0_{ijk}= \id_{D_{i00}}$ and $h^1_{ijk} = h_{ijk}$. Define the map 
$$
h:(I,\partial I,0)\to (\mathcal{D}_{\id}(T^2),\mathcal{S}'(f),\id_{T^2})
$$ 
by the formula:
$$
h(t)(x) = \begin{cases}
\mathsf{M}_{k+at}\circ \mathsf{L}_{j+bt}\circ h^t_{ijk}\circ \mathsf{L}_{j}^{-1}\circ \mathsf{M}_{k}^{-1}(x), & x\in D_{ijk},\\
\mathsf{M}_{at}\circ \mathsf{L}_{bt}(x), & x \in N. 
\end{cases}
$$
It is easy to see that $h$ is well defined. We set
$$
\xi(\alpha, (a,b)) = [h]\in\pi_1(\mathcal{D}_{\id}(T^2),\mathcal{S}'(f),\id_{T^2}).
$$
Also it is no difficult to verify that  the map $\xi$ is the homomorphism. Furthermore from Lemma \ref{lm:psi} and the formula for the map $\tau$ follows that the following diagram is commutative: 
$$
\xymatrix{
	1 \ar[d] &&& 1\ar[d]\\
	\mathrm{Map}(G_v^{loc},\prod_{i = 1}^r\mathcal{S}_{i00}) \ar[rrr]^{(\tau\circ\sigma\circ \iota^{-1}\circ\zeta^{-1})^{-1}}_{\cong} \ar[d] &&& \ker\psi \ar[d]\\
	\mathrm{Map}(G_v^{loc},\prod_{i = 1}^r\mathcal{S}_{i00})\rtimes \pi_1 T^2/G_v^{loc} \ar[d]^{pr}  \ar[rrr]^{\xi}&&& \pi_1(\mathcal{D}_{\id}(T^2),\mathcal{S}'(f)) \ar[d]\\
	\pi_1T^2/G_v^{loc} \ar[d]\ar@{=}[rrr] &&& \pi_1 T^2/G_v^{loc} \ar[d] \\
	1 &&& 1
	}
$$
By 5-lemma we have that $\xi$ is the isomorphism. Theorem \ref{thm:main} was proved.

{\bf Acknowledgment.} The author would like to thank S.  Maksymenko for  attention to my work.

\bibliographystyle{plain}

\end{document}